\numberwithin{equation}{section}
\theoremstyle{plain}
\newtheorem{theorem}{Theorem}[section]
\newtheorem{proposition}[theorem]{Proposition}
\newtheorem{lemma}[theorem]{Lemma}
\newtheorem{corollary}[theorem]{Corollary}
\newtheorem{question}[theorem]{Question}
\theoremstyle{remark}
\theoremstyle{definition}
\renewcommand{\cH}{{\mathcal H}}
\begin{document}

\title[A nonexistence result for rotating flows in $\mathbb{R}^{4}$]{A nonexistence result for rotating mean curvature flows in $\mathbb{R}^{4}$}

\author{Wenkui Du, Robert Haslhofer}

\begin{abstract}
Some worrisome potential singularity models for the mean curvature flow are rotating ancient flows, i.e. ancient flows whose tangent flow at $-\infty$ is a cylinder $\mathbb{R}^k\times S^{n-k}$ and that are rotating within the $\mathbb{R}^k$-factor. We note that while the $\mathbb{R}^k$-factor, i.e. the axis of the cylinder, is unique by the fundamental work of Colding-Minicozzi, the uniqueness of tangent flows by itself does not provide any information about rotations within the $\mathbb{R}^k$-factor. In the present paper, we rule out rotating ancient flows among all ancient noncollapsed flows in $\mathbb{R}^4$.\end{abstract}

\maketitle

\bigskip

\section{Introduction}

To capture the formation of singularities in geometric flows one always magnifies the original flow by rescaling by a sequence of factors going to infinity and passes to a blowup limit. Any such blowup limit is an ancient solution, i.e. a solution that is defined for all sufficiently negative times. In particular, analyzing blowup limits of mean curvature flow near any cylindrical singularity,\footnote{In particular, all generic singularities are expected to be cylindrical \cite{CM_generic,CCMS}.} one is indispensably led to study ancient asymptotically cylindrical flows, i.e. mean curvature flows $M_t\subset\mathbb{R}^{n+1}$ that are defined for all times $t\ll 0$, and whose tangent flow at $-\infty$ is a cylinder, namely
\begin{equation}\label{tangent_minus_infty}
\lim_{\lambda \rightarrow 0} \lambda M_{\lambda^{-2}t}=\mathbb{R}^{k}\times S^{n-k}(\sqrt{2(n-k)|t|})
\end{equation}
for some $1\leq k\leq n-1$. While in the neck-case, i.e. for $k=1$, a complete classification of such flows has been obtained recently in \cite{CHH,CHHW}, a classification in the general case $k\geq 2$ seems currently out of reach.

By the fundamental work of Colding-Minicozzi \cite{CM_uniqueness}, cylindrical tangent flows are unique, i.e. the limit in \eqref{tangent_minus_infty} does not depend on any choice of subsequence $\lambda_i\to 0$. In particular, the axis of the cylinder, i.e. the $\mathbb{R}^k$-factor is unique. Another foundational result is the Brendle-Choi neck-improvement theorem \cite{BC1,BC2}, together with its recent generalizations for $k\geq 2$ from \cite{Zhu1,Zhu2,DZ}, which in many cases allows one to promote the $\mathrm{SO}_{n-k+1}$ symmetry of the tangent flow at $-\infty$ to $\mathrm{SO}_{n-k+1}$ symmetry of the ancient asymptotically cylindrical flow. On the other hand, these results do not provide any information about what is happening within the $\mathbb{R}^k$-factor. Specifically, an important open question is the following.

\begin{question}[rotating ancient flows]\label{question_rotating}
Are there any ancient asymptotically cylindrical flows that are rotating within the $\mathbb{R}^k$-factor?
\end{question}

In particular, this includes the potential scenario of compact solutions with approximately ellipsoidal shape that slowly rotate (and shrink). While the question seems folklore, we are not aware of any formal definition of ``rotating within the $\mathbb{R}^k$-factor". For our present purpose, we instead take the more pragmatic interpretation that to show that a certain ancient asymptotically cylindrical flow is not ``rotating within the $\mathbb{R}^k$-factor" one has to come up with a convincing reason why it is clearly not, specifically:
\begin{itemize}
\item either establish $\mathrm{SO}_{k}$ symmetry,
\item or identify some geometrically distinguished subspaces $V^\ell\subset\mathbb{R}^k$.
\end{itemize}

An example of the former would be the $\mathrm{SO}_{k}\times \mathrm{SO}_{n-k+1}$ symmetric ancient ovals from \cite{HaslhoferHershkovits_ancient}, which clearly cannot (visibly) rotate within the $\mathbb{R}^k$-factor thanks to their $\mathrm{SO}_{k}$ symmetry. An example of the latter, would be the $k-1$ parameter family of $\mathbb{Z}_2^k\times \mathrm{SO}_{n-k+1}$ symmetric ancient ovals from our recent paper \cite{DH_ovals}, where we have $k$ perpendicular lines $L_1,\ldots,L_k\subset\mathbb{R}^k$ of reflection symmetry, which again clearly prevent rotations within the $\mathbb{R}^k$-factor.

In the present paper, we will rule out rotating ancient flows among all ancient noncollapsed flows $M_t$ in $\mathbb{R}^4$. Recall that noncollapsing, a central property in singularity analysis, means that the flow is moving inwards and there is some constant $\alpha>0$ such that every point $p\in M_t$ admits interior and exterior balls of radius at least $\alpha/H(p)$, c.f. \cite{ShengWang,Andrews_noncollapsing,HaslhoferKleiner_meanconvex}. It is known that all blowup limits of mean-convex flows are ancient noncollapsed flows, see \cite{White_size,White_nature,White_subsequent,HH_subsequent}.  More generally, by Ilmanen's mean-convex neighborhood conjecture \cite{Ilmanen_problems}, which has been proved recently in the case of neck-singularities in \cite{CHH,CHHW}, it is expected that in fact every ancient asymptotically cylindrical flow is noncollapsed.

By general theory \cite{HaslhoferKleiner_meanconvex}, for ancient noncollapsed flows in $\mathbb{R}^4$ the tangent flow at $-\infty$ is always either a round shrinking sphere, a round shrinking neck, a round shrinking bubble-sheet or a static plane. Since the static plane is trivial and since rotations can only happen for $k\geq 2$, we can thus assume from now on that the tangent flow at $-\infty$ is a bubble-sheet, namely
\begin{equation}\label{bubble-sheet_tangent_intro}
\lim_{\lambda \rightarrow 0} \lambda M_{\lambda^{-2}t}=\mathbb{R}^{2}\times S^{1}(\sqrt{2|t|}).
\end{equation}
The key to rule out rotating ancient noncollapsed flows in $\mathbb{R}^4$ is to come up with a theorem that captures the deviation from the round bubble-sheet. Some progress in this direction has been made in our recent paper \cite{DH_hearing_shape}. To describe this, recall that \eqref{bubble-sheet_tangent_intro} equivalently means that the renormalized flow $\bar M_\tau = e^{\frac{\tau}{2}}  M_{-e^{-\tau}}$
for $\tau\to -\infty$ converges to $\Gamma=\mathbb{R}^2\times S^{1}(\sqrt{2})$.
Writing $\bar{M}_\tau$ as a graph of a function $u(\cdot,\tau)$ over $\Gamma\cap B_{\rho(\tau)}$, where $\rho(\tau)\to \infty$ as $\tau\to -\infty$, namely
\begin{equation}\label{graph_over_cylinder}
\left\{ q+ u(q,\tau)\nu(q) \, : \, q\in \Gamma\cap B_{\rho(\tau)} \right\} \subset \bar M_\tau\, ,
\end{equation}
where $\nu$ denotes the outwards unit normal of $\Gamma$, we proved:

\begin{theorem}[{bubble-sheet quantization \cite{DH_hearing_shape}}]\label{spectral theorem_restated}
For any ancient noncollapsed mean curvature flow in $\mathbb{R}^{4}$ whose tangent flow at $-\infty$ is given by \eqref{bubble-sheet_tangent_intro}, the bubble-sheet function $u$ satisfies
 \begin{equation}\label{main_thm_ancient}
\lim_{\tau\to -\infty} \Big\|\,
|\tau| u({\bf y},\vartheta,\tau)- {\bf y}^\top Q(\tau){\bf y} +2\mathrm{tr}(Q(\tau))\, \Big\|_{C^{k}(B_R)} = 0
\end{equation}
for all $R<\infty$ and all integers $k$, where $Q(\tau)$ is a symmetric $2\times 2$-matrix, whose eigenvalues are quantized to be either 0 or $-1/\sqrt{8}$, and which possibly depends on the time $\tau$. More precisely, for $\mathrm{rk}(Q(\tau))\neq 1$ the matrix $Q(\tau)$ is actually independent of $\tau$, while in the case $\mathrm{rk}(Q(\tau))=1$ we have
\begin{equation}
Q(\tau)=R(\tau)^\top \begin{pmatrix}
0 & 0\\
0 & -1/\sqrt{8}
\end{pmatrix}R(\tau)
\end{equation}
for some rotation matrix $R(\tau)\in \mathrm{SO}_2$ with $|\dot{R}(\tau)|=o(|\tau|^{-1})$.
\end{theorem}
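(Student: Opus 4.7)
The plan is to perform a spectral analysis of the bubble-sheet profile $u(\mathbf{y},\vartheta,\tau)$ against the linearization of the renormalized mean curvature flow at $\Gamma=\mathbb{R}^2\times S^1(\sqrt{2})$. In renormalized coordinates $u$ satisfies $\partial_\tau u=\mathcal{L}u+\mathcal{Q}(u,\nabla u,\nabla^2 u)$, where $\mathcal{L}$ is the Jacobi drift-Laplace operator on $\Gamma$ acting in the Gaussian-weighted $L^2$-space. The spectrum of $\mathcal{L}$ is the tensor product of Hermite eigenvalues in $\mathbf{y}$ and Fourier eigenvalues in $\vartheta$: positive (unstable) eigenvalues $1$ and $1/2$ for the constant and translation modes, a neutral eigenspace containing the quadratic Hermite polynomials $\{y_1^2-2,\,y_2^2-2,\,y_1y_2\}$ in the $\mathbf{y}$-variables, and strictly negative eigenvalues for all other modes. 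Additional neutral modes coupling $\mathbf{y}$ and $\vartheta$ correspond to tilting the cylinder axis and are eliminated using the bubble-sheet symmetry improvement theorem of \cite{Zhu1,Zhu2,DZ} together with re-centering.

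First I would carry out a Merle--Zaag three-region alternative: decomposing $u=u_++u_0+u_-$ accordingly, weighted energy estimates show that $u_-$ decays faster than any negative power of $|\tau|$, while the unstable modes $u_+$ are killed by re-centering and the symmetry improvement. Consequently the neutral component $u_0$ dominates and one obtains the expansion $u=|\tau|^{-1}\bigl(\mathbf{y}^\top Q(\tau)\mathbf{y}-2\mathrm{tr}(Q(\tau))\bigr)+o(|\tau|^{-1})$, with $Q(\tau)$ a symmetric $2\times 2$-matrix recording the projection of $u$ onto the neutral kernel. Substituting this ansatz back into the equation and matching terms of order $|\tau|^{-2}$ produces a reduced matrix ODE of the form $\dot Q=|\tau|^{-1}F(Q)+o(|\tau|^{-2})$, where $F$ is a quadratic polynomial in $Q$ arising from the projection of $\mathcal{Q}$ onto the neutral kernel. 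A direct computation shows that the symmetric fixed points of $F$ are exactly those matrices whose eigenvalues lie in $\{0,-1/\sqrt{8}\}$, the value $-1/\sqrt{8}$ being forced by the curvature constants of $\Gamma$; a LaSalle-type monotonicity argument for $\mathrm{tr}(Q)$ and $\mathrm{tr}(Q^2)$ then drives $Q(\tau)$ into this fixed-point set.

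In the rank-$0$ and rank-$2$ cases the two eigenvalues of $Q$ coincide, so $Q$ is a scalar multiple of the identity, the linearization about this fixed point is non-degenerate, and $Q$ must be constant. In the rank-$1$ case, write $Q(\tau)=R(\tau)^\top\operatorname{diag}(0,-1/\sqrt{8})R(\tau)$ with $R(\tau)\in\mathrm{SO}_2$; since $F$ is $\mathrm{SO}_2$-equivariant on the kernel (inherited from the rotational symmetry of $\Gamma$ in the $\mathbb{R}^2$-factor), the entire $\mathrm{SO}_2$-orbit of $\operatorname{diag}(0,-1/\sqrt{8})$ consists of fixed points, so $\dot R(\tau)$ is driven solely by the subleading error and satisfies $|\dot R(\tau)|=o(|\tau|^{-1})$. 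The hard part will be the passage from weighted $L^2$ spectral control to the pointwise $C^k$ convergence on $B_R$ asserted in \eqref{main_thm_ancient}, and the justification of the reduced ODE with the required precision in the presence of slowly decaying error terms; this is handled by a bootstrap based on interior parabolic Schauder estimates for the renormalized flow, together with careful propagation of the nonlinear error through the spectral decomposition at each order.
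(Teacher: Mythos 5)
Your overall strategy is the same one underlying the cited proof (and the machinery reproduced in Section 2 of this paper): write the renormalized flow as a graph over $\Gamma=\mathbb{R}^2\times S^1(\sqrt{2})$, decompose with respect to the Ornstein--Uhlenbeck operator $\mathcal{L}$ in the Gaussian $L^2$ space, run a Merle--Zaag alternative, handle the $\mathbf{y}$--$\vartheta$ mixed neutral modes by the symmetry improvement (this is exactly the ``almost circular symmetry'' input), project onto $\{y_1^2-2,\,y_2^2-2,\,y_1y_2\}$ to get a Riccati-type system whose attracting behavior quantizes the eigenvalues to $\{0,-1/\sqrt{8}\}$, and observe that in the rank-one case the $\mathrm{SO}_2$-equivariance leaves the rotation $R(\tau)$ controlled only by the subleading error, whence merely $|\dot R(\tau)|=o(|\tau|^{-1})$. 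The passage from Gaussian $L^2$ control to $C^k(B_R)$ convergence via parabolic estimates and interpolation is also the standard step.

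There is, however, one genuine gap: you claim that the unstable modes ``are killed by re-centering and the symmetry improvement,'' and you conclude that the neutral component always dominates. That step fails. Re-centering (a spacetime translation and dilation) cannot make the projections onto $\mathrm{span}\{1,y_1,y_2\}$ subdominant for all $\tau\to-\infty$; the Merle--Zaag lemma genuinely yields a dichotomy, and the unstable-dominant branch is realized by actual solutions (the round shrinking bubble-sheet and $\mathbb{R}\times$2d-bowl). In that branch the theorem is not vacuous: one must show separately that the conclusion holds with $Q=0$, which is how the paper (following \cite{DH_hearing_shape}) treats it, rather than by eliminating the case. As written, your argument only proves the theorem under the assumption of neutral-mode dominance. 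Relatedly, the assertion that $u_-$ decays faster than any negative power of $|\tau|$ is an overstatement --- the available estimate is only $U_-+U_+=o(U_0)$ (improved in this paper to $C|\tau|^{-\gamma}U_0$), but this weaker bound suffices for the statement at hand, so that inaccuracy is harmless, unlike the treatment of the unstable branch.
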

Intuitively, the theorem strongly suggests that in the rank 1 case there should be a distinguished line $L\subset \mathbb{R}^2$, which geometrically speaking captures the direction in which there is some inwards quadratic bending. However, in our prior paper the error terms were unfortunately so large that we only obtained the estimate $|\dot{R}(\tau)|=o(|\tau|^{-1})$, which is barely not integrable, and thus did not allow us to rule out rotations within the $\mathbb{R}^2$-factor.

In the present paper, we crucially improve this theorem and show that the fine-bubble sheet matrix can in fact be taken independent of time:

\begin{theorem}[{bubble-sheet quantization - improved version}]\label{spectral theorem_improved}
For any ancient noncollapsed mean curvature flow in $\mathbb{R}^{4}$ whose tangent flow at $-\infty$ is given by \eqref{bubble-sheet_tangent_intro}, the bubble-sheet function $u$ satisfies
 \begin{equation}\label{main_thm_ancient}
\lim_{\tau\to -\infty} \Big\|\,
|\tau| u({\bf y},\vartheta,\tau)- {\bf y}^\top Q{\bf y} +2\mathrm{tr}(Q)\, \Big\|_{C^{k}(B_R)} = 0
\end{equation}
for all $R<\infty$ and all integers $k$, where $Q$ is a constant symmetric $2\times 2$-matrix whose eigenvalues are quantized to be either 0 or $-1/\sqrt{8}$.
\end{theorem}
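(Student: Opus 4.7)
The plan is to upgrade Theorem~\ref{spectral theorem_restated} in the nontrivial case $\mathrm{rk}(Q(\tau))=1$, improving the non-integrable bound $|\dot R(\tau)|=o(|\tau|^{-1})$ to an integrable one $|\dot R(\tau)|\le C|\tau|^{-1-\delta}$ for some $\delta>0$; the cases $\mathrm{rk}(Q)\in\{0,2\}$ are already settled. Granted the integrable bound, $R(\tau)$ is Cauchy and converges to some $R_\infty$ with $|R(\tau)-R_\infty|\le C|\tau|^{-\delta}$, so setting $Q:=R_\infty^\top\mathrm{diag}(0,-1/\sqrt{8})R_\infty$, the correction produced by replacing $Q(\tau)$ with $Q$ inside \eqref{main_thm_ancient} is $O(|\tau|^{-1-\delta})$ in $C^k(B_R)$, absorbed into the $o(|\tau|^{-1})$ error from Theorem~\ref{spectral theorem_restated}. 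The theorem thereby reduces entirely to the sharpened rotation-rate estimate.

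To set up this estimate spectrally, I would parametrize $R(\tau)=R_{\theta(\tau)}$ and read off $\theta(\tau)$ from the two traceless neutral quadratic modes
\[
H_1(\mathbf{y})=y_1^2-y_2^2,\qquad H_2(\mathbf{y})=2y_1 y_2
\]
of the drift Laplacian $\mathcal L=\Delta-\tfrac12\mathbf{y}\cdot\nabla+1$ on the Gaussian-weighted cylinder $\mathbb{R}^2\times S^1$. A direct calculation using $Q=-\tfrac{1}{\sqrt{8}}v(\theta)v(\theta)^\top$ with $v(\theta)=(-\sin\theta,\cos\theta)^\top$ yields
\[
\mathbf{y}^\top Q(\tau)\mathbf{y}-2\,\mathrm{tr}(Q(\tau))=-\tfrac{1}{2\sqrt{8}}(|\mathbf{y}|^2-4)+\tfrac{\cos 2\theta(\tau)}{2\sqrt{8}}H_1(\mathbf{y})+\tfrac{\sin 2\theta(\tau)}{2\sqrt{8}}H_2(\mathbf{y}),
\]
so the Gaussian projections $a(\tau),b(\tau)$ of $|\tau|u$ onto $H_1,H_2$ satisfy $(a,b)(\tau)=\tfrac{1}{2\sqrt{8}}(\cos 2\theta,\sin 2\theta)(\tau)+o(1)$. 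Since $a^2+b^2$ is bounded below in the rank-1 regime, any bound $|\dot a(\tau)|+|\dot b(\tau)|\le C|\tau|^{-1-\delta}$ immediately gives the required integrable bound on $\dot\theta$.

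The main technical obstacle is producing that extra decay $|\tau|^{-\delta}$. Writing the renormalized MCF equation as $\partial_\tau u=\mathcal L u+\mathcal N(u,\nabla u,\nabla^2 u)$ and projecting against $H_1,H_2$ annihilates the linear term (as $H_i\in\ker\mathcal L$), so $\dot a,\dot b$ are driven purely by the quadratic nonlinearity $\mathcal N$ together with cutoff errors at $|\mathbf{y}|=\rho(\tau)$. The naive bound $|\mathcal N(u)|\lesssim|\tau|^{-2}$ gives only $|\dot a|+|\dot b|=O(|\tau|^{-2})$, hence $|\dot\theta|=O(|\tau|^{-1})$, merely reproducing Theorem~\ref{spectral theorem_restated}. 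The route to genuine improvement is to push the asymptotic expansion one further order by writing $u=|\tau|^{-1}u_1(\mathbf{y},\vartheta;Q(\tau))+w$ with $u_1$ the explicit leading profile and $w$ a smaller remainder in the stable complement of $\mathcal L$. By rotational covariance the pure-$u_1$ projection of $\mathcal N$ onto $\mathrm{span}(H_1,H_2)$ lies tangent to the $\mathrm{SO}_2$-orbit of $Q$, i.e.\ is a scalar multiple of $[J,Q]$ with $J$ the rotation generator, and the scalar is a universal constant depending only on the quantized eigenvalue $-1/\sqrt{8}$; the $o(|\tau|^{-1})$ conclusion of Theorem~\ref{spectral theorem_restated} forces this constant to vanish, since a nonzero value would generate a uniform-in-$\tau$ rotation rate. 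The remaining cross and $w^2$ contributions can then be estimated by the size of $w$, and solving the linearized equation in the stable subspace with the residual from Theorem~\ref{spectral theorem_restated} as source gives $\|w\|_{C^k(B_R)}\le C|\tau|^{-1-\delta}$, which in turn produces $|\dot a|+|\dot b|=O(|\tau|^{-2-\delta})$. Integrating $|\dot\theta|\le C|\tau|^{-1-\delta}$ gives $\theta(\tau)\to\theta_\infty$, and the first paragraph completes the proof.
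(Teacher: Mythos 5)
Your overall strategy --- reduce to the rank-one case and upgrade the rotation rate from the non-integrable $o(|\tau|^{-1})$ to an integrable $O(|\tau|^{-1-\delta})$ bound by pushing the expansion one order further --- is indeed parallel to what the paper does (there the conclusion is reached by showing $|\alpha_3(\tau)|\leq C|\tau|^{-1-\delta/3}$ after diagonalizing at one time $\tau_0$ and integrating the ODE for $z=-\tau\alpha_3$). However, the crucial quantitative step is asserted rather than proved. Your claim that ``solving the linearized equation in the stable subspace with the residual from Theorem \ref{spectral theorem_restated} as source gives $\|w\|_{C^k(B_R)}\leq C|\tau|^{-1-\delta}$'' cannot work as stated: the residual from Theorem \ref{spectral theorem_restated} carries no rate (it is only $o(|\tau|^{-1})$), so it cannot serve as a source producing a definite extra power $|\tau|^{-\delta}$; moreover the remainder $w=\hat u-\sum_j\alpha_j\psi_j$ does not lie in the stable subspace --- it contains unstable components and the remaining neutral modes $y_i\cos\vartheta,\,y_i\sin\vartheta$. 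In the paper the extra decay comes from three separate quantitative inputs that your sketch does not supply: a \emph{quantitative} Merle--Zaag lemma upgrading $U_++U_-=o(U_0)$ to $U_++U_-\leq C|\tau|^{-\gamma}U_0$ (this is where $\delta=\gamma/2$ is born), the almost circular symmetry controlling the rotational neutral modes to order $|\tau|^{-100}$, and the improved graphical radius estimate $\|u\|_{C^4}\leq\rho^{-2}$. In addition, even granting $\|w\|_{\mathcal H}\leq C|\tau|^{-1-\delta}$, the quadratic projections $\langle w^2,\psi_k\rangle_{\mathcal H}$ against the quadratically growing eigenfunctions are not controlled by $C^k(B_R)$ bounds on fixed balls or by $\|w\|_{\mathcal H}^2$ alone; the paper needs a parabolic energy argument giving $\|\nabla w\|_{\mathcal H}\leq C|\tau|^{-1-\delta}$ together with the weighted Poincar\'e inequality. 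Without these ingredients your $|\dot a|+|\dot b|=O(|\tau|^{-2-\delta})$ is unsupported, and the proof does not close.

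A smaller inaccuracy: the projection of the pure quadratic term onto $\mathrm{span}(H_1,H_2)$ is not ``tangent to the $\mathrm{SO}_2$-orbit of $Q$'' (i.e.\ not a multiple of $[J,Q]$); since $A^2$ commutes with $A$, its traceless part is proportional to the traceless part of $Q$ itself, i.e.\ purely radial, and the rotational component vanishes identically by this algebra --- no appeal to Theorem \ref{spectral theorem_restated} is needed or appropriate for that point. The correct takeaway, as in the paper's matrix ODE \eqref{Riccati ODEs_ab}--\eqref{Riccati ODEs_xy}, is that any rotation of the eigenframe is driven entirely by the error terms, which is precisely why the integrable error bound of Proposition \ref{improved_error} is the heart of the matter.
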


A very important inspiration for our present result is the work by Filippas-Liu \cite{FilippasLiu}, who proved a related quantization result (forwards in time, of course with the opposite sign) for singularities of multidimensional semilinear heat equations. However, controlling the error terms in the setting of mean curvature flow turned out to be quite a bit more delicate. We also note that while the Lojasiewicz inequality from Colding-Minicozzi \cite{CM_uniqueness} does not provide any information about rotations within the $\mathbb{R}^2$-factor, it is nevertheless a key ingredient for our analysis. Specifically, the uniqueness of cylindrical tangent flows together with an improved graphical radius estimate are crucial to get our fine bubble-sheet analysis started.

As a consequence of Theorem \ref{spectral theorem_improved}, taking also into account work in the rank 0 case from \cite{CHH_wing,DH_hearing_shape} and work in the rank 2 case from \cite{CDDHS}, we obtain:

\begin{corollary}[nonexistence of rotating ancient flows]\label{cor_nonexistence}
Every ancient noncollapsed flow in $\mathbb{R}^4$, whose tangent flow at $-\infty$ is given by \eqref{bubble-sheet_tangent_intro}, 
\begin{itemize}
\item either is $\mathrm{SO}_{2}$ symmetric within the $\mathbb{R}^2$-factor,
\item or possesses one or two geometrically distinguished line $L\subset\mathbb{R}^2$ or lines $L_1,L_2\subset\mathbb{R}^2$, respectively, of reflection symmetry,
\item or possesses a geometrically distinguished line $L\subset\mathbb{R}^2$ of inwards quadratic bending.
\end{itemize}
In particular, there are no rotating ancient noncollapsed flows in $\mathbb{R}^4$.
\end{corollary}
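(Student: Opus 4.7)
The plan is to deduce the corollary as an essentially immediate consequence of Theorem \ref{spectral theorem_improved} via a case analysis on the rank of the (now time-independent) symmetric matrix $Q$. Since the eigenvalues of $Q$ are quantized to lie in $\{0,-1/\sqrt{8}\}$, only three cases occur: $\mathrm{rk}(Q)\in\{0,1,2\}$. In each case one either exhibits a canonically defined geometric object attached to the flow or invokes a known classification.

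In the rank 1 case, let $L\subset\mathbb{R}^2$ be the one-dimensional eigenspace of $Q$ with eigenvalue $-1/\sqrt{8}$. Because the left-hand side of \eqref{main_thm_ancient} is intrinsically determined by the renormalized flow $\bar M_\tau$, and because the quadratic profile $\mathbf{y}^\top Q\mathbf{y}$ is uniquely extracted from that asymptotic expansion, the line $L$ is canonically associated to the flow. Geometrically, $L$ is the direction along which the bubble-sheet bends inwards at quadratic rate $|\tau|^{-1}$, which provides exactly the third bullet of the trichotomy and immediately rules out any rotation within the $\mathbb{R}^2$-factor.

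In the rank 2 case, $Q=-\tfrac{1}{\sqrt{8}}\,\mathrm{Id}$ is rotation invariant and does not by itself single out any line. Here I plan to quote the classification results from \cite{CDDHS}, which refine the analysis to the next order in the expansion and establish either full $\mathrm{SO}_2$ symmetry or reflection symmetry across one or two perpendicular lines in $\mathbb{R}^2$. In the rank 0 case, the leading quadratic profile vanishes and the results of \cite{CHH_wing, DH_hearing_shape} apply to yield $\mathrm{SO}_2$ symmetry within the $\mathbb{R}^2$-factor. Assembling the three cases produces the stated trichotomy and hence the nonexistence of rotating ancient noncollapsed flows in $\mathbb{R}^4$.

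The substantive difficulty is not in this case analysis but in Theorem \ref{spectral theorem_improved} itself: upgrading the time-dependent matrix $Q(\tau)$ of Theorem \ref{spectral theorem_restated} to a genuinely constant $Q$ requires integrating a rotation rate bound that is only $|\dot R(\tau)|=o(|\tau|^{-1})$ in the prior work, and this is exactly the non-integrable threshold that previously blocked the conclusion. Without this improvement, the rank 1 case would produce only a slowly rotating eigenline rather than a fixed distinguished line, and one could not conclude the nonexistence of rotation. Thus the real weight of the corollary rests on the improved quantization theorem, and the case analysis above is the routine final step.
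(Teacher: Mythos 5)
Your proposal takes essentially the same route as the paper: a case analysis on $\mathrm{rk}(Q)$ using the improved (time-independent) quantization theorem, with the rank $1$ case handled directly by the distinguished eigenline of $Q$, and the rank $0$ and rank $2$ cases handled by quoting the classifications in \cite{CHH_wing,DH_hearing_shape} and \cite{CDDHS}, respectively. One inaccuracy to fix: in the rank $0$ case the classification does \emph{not} yield $\mathrm{SO}_2$ symmetry within the $\mathbb{R}^2$-factor in general --- it gives that the flow is, up to scaling and rigid motion, either the round shrinking bubble-sheet (which is $\mathrm{SO}_2$ symmetric) or $\mathbb{R}\times$2d-bowl, which is only reflection symmetric within the $\mathbb{R}^2$-factor, since the bowl's translation direction breaks rotational symmetry there. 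So the rank $0$ case lands partly in the second bullet of the trichotomy rather than the first; the conclusion of the corollary is unaffected, but your statement of that case should be corrected accordingly.
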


In particular, the corollary removes a stumbling block in the classification program, introduced in \cite{CHH_wing,DH_hearing_shape}, for ancient noncollapsed flows in $\mathbb{R}^4$.

\bigskip

To outline our proof, we recall from \cite{CIM,DH_hearing_shape} that the analysis of the renormalized flow over the bubble-sheet $\Gamma=\mathbb{R}^2\times S^{1}(\sqrt{2})$ is governed by an Ornstein-Uhlenbeck type operator $\mathcal{L}$, which is a self-adjoint operator on the Hilbert space of Gaussian $L^2$ functions on $\Gamma$. Similarly as in \cite{DH_hearing_shape}, we consider the spectral coefficients $\alpha_1,\alpha_2,\alpha_3$ obtained by taking the inner product of a certain truncated version of the bubble-sheet function $u$ with certain neutral eigenfunctions of $\mathcal{L}$. While the asymptotic behavior, as in the statement of the bubble-sheet quantization theorem, can be guessed quite easily by formally deriving ODEs for these spectral coefficients, the crux of the matter to actually prove the theorem is to derive good enough error estimates. To this end, we first prove a quantitative version of the Merle-Zaag lemma. We then derive an improved error estimate for our spectral ODEs, which crucially improves our prior nonintegrable $o(|\tau|^{-1})$ estimate into an integrable $O(|\tau|^{-1-\delta})$ estimate. Finally, we suitably integrate our spectral ODEs with the improved error estimate to conclude the proof.
\bigskip

\noindent\textbf{Acknowledgments.}
This research was supported by the NSERC Discovery Grant and the Sloan Research Fellowship of the second author.

\bigskip

\section{The proof}
We recall from \cite{CIM,DH_hearing_shape} that the analysis of the renormalized flow $\bar{M}_\tau=e^{\tau/2}M_{-e^{-\tau}}$ over $\Gamma=\mathbb{R}^2\times S^{1}(\sqrt{2})$ is governed by the Ornstein-Uhlenbeck type operator
\begin{equation}
\mathcal L=\frac{\partial^2}{\partial y_1^2}-\frac{y_1}{2}\frac{\partial}{\partial y_{1}}
+\frac{\partial^2}{\partial y_2^2}-\frac{y_2}{2}\frac{\partial}{\partial y_{2}}
+\frac{1}{2}\frac{\partial^2}{\partial \vartheta^2}+1.
\end{equation}
We denote by $\mathcal{H}$ the Hilbert space of Gaussian $L^2$ functions on $\Gamma$, equipped with the inner product
\begin{equation}\label{def_norm}
\langle f, g\rangle_{\mathcal{H}}= \frac{1}{(4\pi)^{3/2}} \int_{\Gamma}  f(q)g(q) e^{-\frac{|q|^2}{4}} \, dq\, .
\end{equation}
Analyzing the spectrum of $\mathcal L$, we can decompose our Hilbert space as
\begin{equation}
\mathcal H = \mathcal{H}_+\oplus \mathcal{H}_0\oplus \mathcal{H}_-,
\end{equation}
where the unstable space is given by
\begin{align}
\mathcal{H}_+ =\text{span}\{1, y_1, y_{2}, \cos\vartheta, \sin\vartheta\},\label{basis_hplus}
\end{align}
and neutral space is given by
\begin{align}
\mathcal{H}_0 =\text{span}\left\{y^2_{1}-2, y^2_{2}-2, y_{1}y_{2}, y_{1}\cos\vartheta, y_{1}\sin\vartheta,  y_{2}\cos\vartheta, y_{2}\sin\vartheta \right\}.
\label{basis_hneutral}
\end{align}
By \cite[Proposition 2.5]{DH_hearing_shape}, which has been proved using the Lojasiewicz inequality from \cite{CM_uniqueness}, there exist $\gamma\in(0,1)$ and $\tau_\ast>-\infty$ such that $\rho(\tau)=|\tau|^{\gamma}$ is an admissible graphical radius for $\tau\leq \tau_\ast$, so in particular $\bar{M}_\tau$ can be written as a graph of a function $u(\cdot,\tau)$ over $\Gamma\cap B_{2\rho(\tau)}$ with the estimate
\begin{equation}\label{graph_rad_ass}
\| u(\cdot,\tau) \|_{C^4(\Gamma\cap B_{2\rho(\tau)}(0))}\leq \rho(\tau)^{-2}.
\end{equation}
As usual, fixing a smooth cutoff function $\chi$ satisfying $\chi(s)=1$ for $s\leq 1$ and $\chi(s)=0$ for $s\geq 2$, we work with the truncated graph function
\begin{equation}
\hat{u}(\cdot,  \tau)=u(\cdot,  \tau)\chi\left(\frac{|\cdot|}{|\tau|^\gamma}\right),
\end{equation}
and consider
\begin{align}
U_0(\tau) := \|\mathcal{P}_0 \hat{u}(\cdot,\tau)\|_{\mathcal{H}}^2,\qquad U_{\pm}(\tau) := \|\mathcal{P}_{\pm} \hat{u}(\cdot,\tau)\|_{\mathcal{H}}^2,
\end{align}
where $\mathcal{P}_{0}$ and $\mathcal{P}_{\pm}$ are the orthogonal projections to $\mathcal{H}_0$ and $\mathcal{H}_{\pm}$ respectively. Then, by \cite[Equation (2.24)]{DH_hearing_shape} we have the differential inequalities
\begin{align}\label{modes_inequality}
&\dot{U}_+ \geq U_+ - \Lambda |\tau|^{-\gamma} \, (U_+ + U_0 + U_-),\nonumber \\
&\big | \dot{U}_0 \big | \leq \Lambda |\tau|^{-\gamma} \, (U_+ + U_0 + U_-), \\\
&\dot{U}_- \leq -U_- + \Lambda |\tau|^{-\gamma} \, (U_+ + U_0 + U_-) \nonumber
\end{align}
for $\tau\leq\tau_\ast$, where $\Lambda <\infty$ is a numerical constant. By the Merle-Zaag alternative from \cite[Proposition 2.2]{DH_hearing_shape} for $\tau\to -\infty$ either the neutral mode is dominant or the unstable mode is dominant. If the unstable mode is dominant, then, as explained in \cite[Page 17]{DH_hearing_shape}, the conclusion of the bubble-sheet quantization theorem holds with $Q=0$, so we can assume from now on that the neutral mode is dominant, i.e. that for $\tau\to -\infty$ we have
\begin{equation}\label{eq_neutral_dom}
U_{-}+U_{+}=o(U_0).
\end{equation}
For our present purpose, it is important to improve this to a more quantitative estimate:

\begin{lemma}[{quantitative Merle-Zaag type estimate}]\label{quant_MZ}
There exist $\tau_0\leq \tau_\ast$ and $C_0<\infty$, such that for all $\tau\leq \tau_0$ we have
\begin{equation}\label{zero_dom_mz}
U_{-}(\tau)+U_+(\tau) \leq \frac{C_0}{|\tau|^\gamma}U_{0}(\tau).
\end{equation}
\end{lemma}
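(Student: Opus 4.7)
The plan is to upgrade the qualitative neutral dominance $U_+ + U_- = o(U_0)$ from \eqref{eq_neutral_dom} to the quantitative rate $|\tau|^{-\gamma}$ by linearizing around the neutrally-dominant regime and integrating the resulting scalar Bernoulli-type ODE inequalities for the ratios $f_\pm(\tau) := U_\pm(\tau)/U_0(\tau)$. Since $f_\pm \to 0$, I first pick $\tau_0 \leq \tau_\ast$ with $f_+(\tau) + f_-(\tau) \leq 1/2$ for all $\tau \leq \tau_0$; then $U_0(\tau) > 0$ and $V := U_+ + U_0 + U_- \leq 2U_0$. Differentiating the quotients $U_\pm/U_0$ and plugging in the three inequalities of \eqref{modes_inequality}, while absorbing the contributions of $\dot U_0/U_0$ using $|\dot U_0|/U_0 \leq \Lambda|\tau|^{-\gamma}(1+f_++f_-)$, produces the clean scalar inequalities
\[
\dot f_+ \geq f_+ - C\Lambda|\tau|^{-\gamma}, \qquad \dot f_- \leq -f_- + C\Lambda|\tau|^{-\gamma},
\]
where $C$ is a numerical constant arising from the bounded linearization factor $(1+f_++f_-)(1+f_\pm) \leq 3$.

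Next, I would integrate these two inequalities in opposite temporal directions. For the unstable ratio, multiplying by $e^{-\tau}$ and integrating from $\tau$ to $\tau_0$ yields
\[
f_+(\tau) \leq e^{\tau-\tau_0} f_+(\tau_0) + C\Lambda\, e^\tau \int_\tau^{\tau_0} |s|^{-\gamma} e^{-s}\, ds;
\]
the first term decays exponentially in $\tau$ and is dominated by $|\tau|^{-\gamma}$, and for the integral I use the substitution $t = -s$ followed by one integration by parts. The key point is that the naive sup-bound $|s|^{-\gamma} \leq |\tau_0|^{-\gamma}$ would give a mere $O(1)$ estimate; the integration by parts instead extracts the boundary contribution at $t = |\tau|$ and gives $\int_{|\tau_0|}^{|\tau|} t^{-\gamma} e^t\,dt \leq 2|\tau|^{-\gamma} e^{|\tau|}$ for $|\tau_0|$ large enough, hence $f_+(\tau) \leq C'\Lambda|\tau|^{-\gamma}$. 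For the stable ratio, multiplying by $e^\tau$ and integrating from $-\infty$ up to $\tau$ (the boundary term $e^{\tau_1} f_-(\tau_1)$ vanishes as $\tau_1 \to -\infty$ since $f_-$ is bounded) gives $f_-(\tau) \leq C\Lambda\, e^{-\tau} \int_{-\infty}^\tau |s|^{-\gamma} e^s\,ds$. Here the direct bound $|s|^{-\gamma} \leq |\tau|^{-\gamma}$ on $(-\infty, \tau]$ is already sharp and yields $f_-(\tau) \leq C\Lambda |\tau|^{-\gamma}$.

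Adding the two bounds produces \eqref{zero_dom_mz} with $C_0$ a universal multiple of $\Lambda$. I do not anticipate a serious obstacle: the only step requiring genuine care is the integration-by-parts estimate for the $f_+$ integral, since the integrand has its maximum at the less negative endpoint and a crude sup-bound would lose the $|\tau|^{-\gamma}$ decay; everything else is a routine Grönwall-type calculation built on top of the linearization made possible by the qualitative neutral dominance \eqref{eq_neutral_dom}.
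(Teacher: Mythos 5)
Your proof is correct, but it takes a genuinely different route from the paper. You pass to the ratios $f_\pm=U_\pm/U_0$, use the qualitative dominance \eqref{eq_neutral_dom} only to fix $\tau_0$ with $f_++f_-\leq \tfrac12$ (hence $U_++U_0+U_-\leq 2U_0$), derive the linear scalar inequalities $\dot f_+\geq f_+-C\Lambda|\tau|^{-\gamma}$, $\dot f_-\leq -f_-+C\Lambda|\tau|^{-\gamma}$, and integrate with explicit integrating factors; the one delicate step, which you handle correctly by integration by parts (it only needs $\gamma/|\tau_0|\leq\tfrac12$), is the bound $e^{-|\tau|}\int_{|\tau_0|}^{|\tau|}t^{-\gamma}e^{t}\,dt\leq 2|\tau|^{-\gamma}$, and the backward integration for $f_-$ indeed needs only the crude bound $|s|^{-\gamma}\leq|\tau|^{-\gamma}$. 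The paper instead stays close to the classical Merle--Zaag argument with the time-dependent coefficient $\lambda(\tau)=\Lambda|\tau|^{-\gamma}$: a barrier/first-crossing argument on $f:=U_--2\lambda(U_++U_0)$ gives $U_-\leq 2\lambda(U_0+U_+)$ using only that all modes tend to zero, then \eqref{eq_neutral_dom} is used to find a time where $g:=8\lambda U_0-U_+$ is positive, and $g>0$ propagates backward because $\dot g<0$ at a largest crossing time, using the slow variation $\dot\lambda\leq\tfrac1{10}\lambda$. Your Gr\"onwall scheme is more explicit and treats both modes uniformly, at the cost of dividing by $U_0$: you should say a word on why $U_0(\tau)>0$ for $\tau\leq\tau_0$ (it is implicit in the standard reading of \eqref{eq_neutral_dom}, or follows from \cite[Claim 3.5]{DH_hearing_shape}), a point the paper's barrier argument sidesteps entirely. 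Also, your closing remark that $C_0$ is a universal multiple of $\Lambda$ overstates slightly, since comparing the exponentially decaying term $e^{\tau-\tau_0}f_+(\tau_0)$ with $|\tau|^{-\gamma}$ introduces a constant depending on $\tau_0$ and $\gamma$; this is harmless because the lemma only requires some finite $C_0$.
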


\begin{proof} Our argument is closely related to the proof of the classical Merle-Zaag lemma \cite{MZ}, but with some modifications to obtain a better decay. Specifically, here we work with the time-dependent function
\begin{equation}
\lambda(\tau):=\frac{\Lambda}{|\tau|^{\gamma}},
\end{equation}
where $\Lambda$ and $\gamma$ are the constants from above. Possibly after decreasing the constant $\tau_\ast$, we may assume that for $\tau\leq \tau_\ast$ we have $\lambda\leq \tfrac{1}{10}$ and also
\begin{equation}\label{small_der}
\dot{\lambda}\leq \tfrac{1}{10}\lambda.
\end{equation}

Now, if at some time $\bar{\tau}\leq \tau_\ast$ the quantity $f:=U_{-}-2\lambda (U_{+}+U_{0})$ was positive, then using \eqref{modes_inequality} and $\dot{\lambda}\geq 0$ at this time we would get
\begin{equation}
    \dot{f}\leq -U_{-}+\lambda(1+4\lambda)\left(1+\frac{1}{2\lambda}\right)U_{-}-2\dot{\lambda}(U_{+}+U_{0})\leq 0,
\end{equation}
which would imply that $ f(\tau)\geq f(\bar{\tau})>0$
for all $\tau\leq \bar{\tau}$, contradicting the fact that $\lim_{\tau\rightarrow -\infty}f(\tau)=0$. This shows that for all $\tau\leq \tau_\ast$ we have
\begin{equation}\label{-leq0+}
    U_{-}\leq 2\lambda(U_{0}+U_{+}).
\end{equation}

Next, we consider the quantity
\begin{equation}
g:=8\lambda U_0-U_{+}.
\end{equation}

If we had $g(\tau)\leq 0$ for all $\tau\leq \tau_\ast$, then \eqref{modes_inequality} and \eqref{-leq0+} would imply
\begin{equation}
\dot{U}_{+}\geq \frac14 U_{+},\qquad |\dot{U}_0|\leq 2\lambda(U_0+U_+)\leq \left(\frac14 +2\lambda\right)U_{+}.
\end{equation}
Integrating the first differential inequality from $-\infty$ to $\tau$ would give
\begin{equation}
U_{+}(\tau)\geq\frac14 \int_{-\infty}^\tau U_{+}(\sigma)\, d\sigma .
\end{equation}
Using this and the monotonicity of $\lambda$, integrating the second differential inequality from $-\infty$ to $\tau$ would yield
\begin{equation}
U_{0}(\tau) \leq (1+8\lambda(\tau))U_{+}(\tau),
\end{equation}
contradicting \eqref{eq_neutral_dom}. Thus, there exists some $\tau_0\leq \tau_\ast$ such that  $g(\tau_0)>0$.

Finally, if the inequality $g>0$ failed  at some time less than $\tau_0$, then at the largest time $\bar\tau<\tau_0$ where it failed we would have $g(\bar\tau)=0$ and by \eqref{modes_inequality}, \eqref{small_der} and \eqref{-leq0+} we would get
\begin{align}
\dot{g} &\leq \lambda(8\lambda+1)(U_{+}+U_0+U_-)-U_{+} { +}8\dot{\lambda}U_0\nonumber\\
&\leq \lambda(8\lambda+1)(8\lambda+1+2\lambda (1+8\lambda ))U_0 - 8(\lambda{ -}\dot{\lambda} )U_0\nonumber\\
&\leq -(\lambda {-} 8\dot{\lambda} )U_0<0,
\end{align}
contradicting the definition of $\bar\tau$. This shows that for all $\tau\leq \tau_0$ we have
\begin{equation}
U_{+} < 8\lambda U_0.
\end{equation}
Remembering \eqref{-leq0+}, this concludes the proof of the lemma.
\end{proof}

Now, as in \cite[Section 3.1]{DH_hearing_shape}, we consider the spectral coefficients
\begin{equation}
    \alpha_{j}(\tau):=  \frac{\langle   \hat{u}(\cdot,\tau), \psi_{j} \rangle_{\cH}}{|| \psi_{j} ||_{\cH}^{2}},
\end{equation}
with respect to the neutral eigenfunctions
\begin{equation}
\psi_{1}=y_1^2-2,\quad \psi_{2}=y_2^2-2, \quad \psi_{3}=2y_1y_2.
\end{equation}

It has been shown in \cite[Proposition 3.1]{DH_hearing_shape} that the spectral coefficients $\vec{\alpha}=(\alpha_1,\alpha_2,\alpha_3)$ satisfy the ODE system
\begin{equation}\label{ODEs}
 \begin{cases}
   \dot{\alpha}_{1}=-\sqrt{8}(\alpha^2_{1}+\alpha_{3}^2)+E_{1}\\
   \dot{\alpha}_{2}=-\sqrt{8}(\alpha^2_{2}+\alpha_{3}^2)+E_{2}\\
    \dot{\alpha}_{3}=-\sqrt{8}(\alpha_{1}+\alpha_2)\alpha_{3}+E_{3},\\
    \end{cases}
\end{equation}
where the error terms satisfy
\begin{equation}
|E_{j}(\tau)|=o(|\vec{\alpha}(\tau)|^2+|\tau|^{-100}).
\end{equation}
We also recall that by \cite[Claim 3.5]{DH_hearing_shape} for $\tau$ sufficiently negative one has
\begin{equation}
|\vec{\alpha}(\tau)|^2\geq \frac{1}{10|\tau|^2}.
\end{equation}
Here, we improve the error estimate as follows:

\begin{proposition}[improved error estimate]\label{improved_error}
There exist $\delta>0$, $\tau_0>-\infty$ and $C<\infty$, such that for all $\tau\leq\tau_0$ the error terms in \eqref{ODEs} satisfy
\begin{equation}
|E_{j}(\tau)|\leq \frac{C}{|\tau|^{2+\delta}}.
\end{equation}
\end{proposition}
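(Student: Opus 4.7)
The plan is to revisit the derivation of \eqref{ODEs} in \cite[Proposition 3.1]{DH_hearing_shape} and upgrade the $o(|\vec{\alpha}|^{2})$-factor in its error term to a quantitative power-of-$|\tau|$ improvement by feeding in the new quantitative Merle-Zaag estimate of Lemma \ref{quant_MZ} together with an a priori upper bound $|\vec{\alpha}(\tau)|\leq C|\tau|^{-1}$. Morally, the old error was $o(1)$-small because it was controlled by the qualitative Merle-Zaag alternative $U_{\pm}=o(U_0)$ and by cubic-and-higher nonlinearities; turning these $o(1)$ gains into polynomial decays in $|\tau|^{-1}$ is now available.

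Concretely, I would decompose $\hat{u}=\mathcal{P}_{+}\hat{u}+\mathcal{P}_{0}\hat{u}+\mathcal{P}_{-}\hat{u}$ and expand $\partial_\tau \hat{u}=\mathcal{L}\hat{u}+N(\hat{u})+[\partial_\tau,\chi]u$ inside $\dot\alpha_j=\langle \partial_\tau \hat{u},\psi_j\rangle_{\cH}/\|\psi_j\|_{\cH}^{2}$. The cutoff commutator is supported in $|y|\geq |\tau|^\gamma$ and, combined with \eqref{graph_rad_ass} and the Gaussian weight, contributes at most $O(|\tau|^{-100})$. The main terms $-\sqrt{8}(\alpha_i^{2}+\alpha_3^{2})$ and analogues come from the quadratic part of $N$ applied to $\mathcal{P}_0\hat{u}$ and paired with $\psi_j$, exactly as in \cite{DH_hearing_shape}. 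The remaining contributions to $E_j$ then split into (a) mixed quadratic interactions involving at least one factor of $\mathcal{P}_{\pm}\hat{u}$, and (b) cubic and higher-order nonlinearities. For (a), Cauchy-Schwarz and Lemma \ref{quant_MZ} give
\begin{equation*}
U_{+}+U_{-}\;\leq\; \tfrac{C_{0}}{|\tau|^{\gamma}}\, U_{0}\;\leq\; \tfrac{C}{|\tau|^{\gamma}}\,|\vec{\alpha}(\tau)|^{2},
\end{equation*}
so (a) is bounded by $C|\tau|^{-\gamma/2}|\vec{\alpha}|^{2}$. For (b), using $\|\hat{u}\|_{\cH}\leq C|\vec{\alpha}|$ together with the a priori bound $|\vec{\alpha}|\leq C|\tau|^{-1}$ yields a bound of order $C|\vec{\alpha}|^{3}\leq C|\tau|^{-1}|\vec{\alpha}|^{2}$. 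Combining (a) and (b) with $|\vec{\alpha}|^{2}\leq C|\tau|^{-2}$ gives $|E_j|\leq C|\tau|^{-2-\delta}$ for $\delta=\min(\gamma/2,1)$.

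The main obstacle I anticipate is twofold. First, the neutral space $\mathcal{H}_0$ is seven-dimensional, so the bound $\|\mathcal{P}_0\hat{u}\|_{\cH}\leq C|\vec{\alpha}|$ requires showing that the coefficients of $\hat u$ against the $\vartheta$-dependent neutral eigenfunctions $y_i\cos\vartheta, y_i\sin\vartheta$ are subleading; I would handle this by projecting the flow equation onto those modes and verifying that they satisfy ODEs with dominant negative quadratic drift, forcing them to decay at least as fast as $|\vec{\alpha}|$. Second, establishing the a priori upper bound $|\vec{\alpha}|\leq C|\tau|^{-1}$ uniformly up to some $\tau_0$ requires extracting it directly from \eqref{ODEs} via a Riccati-type comparison (exploiting that the quadratic forms $-\sqrt{8}(\alpha_1^{2}+\alpha_3^{2})$ etc.\ are nonpositive), or by invoking the weaker Theorem \ref{spectral theorem_restated}; both routes go through, since even the original $o(|\vec{\alpha}|^{2})$ error is enough to bootstrap the upper bound, which then feeds back into the improved estimate.
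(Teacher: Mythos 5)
Your general strategy---rewriting the error in the spectral ODEs in terms of the remainder $w:=\hat u-\sum_j\alpha_j\psi_j$ and upgrading the qualitative smallness to a power of $|\tau|$ via Lemma \ref{quant_MZ} and the a priori bound $|\vec\alpha(\tau)|\leq C|\tau|^{-1}$---is indeed how the paper's proof begins: the task reduces to bounding $2\sum_i\alpha_i\langle\psi_i\psi_k,w\rangle_\cH+\langle w^2,\psi_k\rangle_\cH$ by $C|\tau|^{-2-\delta}$, and the linear-in-$w$ term is handled exactly as you suggest. The first genuine gap is your treatment of the terms quadratic in the remainder, your category (a). You cannot bound $\langle w^2,\psi_k\rangle_\cH$ (or $\langle(\mathcal{P}_{\pm}\hat u)^2,\psi_k\rangle_\cH$, etc.) by ``Cauchy--Schwarz and Lemma \ref{quant_MZ}'': the weight $\psi_k$ is an unbounded polynomial and $\mathcal{P}_{\pm}\hat u$ lies in an infinite-dimensional space, so control of the Gaussian $L^2$-norm $\|w\|_\cH$ gives no control of $\int w^2|\psi_k|e^{-|q|^2/4}$, which requires weighted higher moments (an $L^4$-type bound); the crude pointwise bound $|w|\lesssim|\tau|^{-2\gamma}+|\tau|^{-1+2\gamma}$ on the support of the cutoff is useless when $\gamma$ is close to $1$. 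The paper supplies precisely the missing ingredient here: it derives $(\partial_\tau-\mathcal{L})w=g$ with $\|g(\tau)\|_\cH\leq C|\tau|^{-1-2\gamma}$, runs a parabolic energy argument to obtain the gradient estimate $\|\nabla w(\tau)\|_\cH\leq C|\tau|^{-1-\delta}$, and only then controls $\langle w^2,\psi_k\rangle_\cH$ via the weighted Poincar\'e inequality. Nothing in your outline replaces this step.

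The second gap concerns the four rotational neutral modes $y_i\cos\vartheta,\,y_i\sin\vartheta$. Showing that their coefficients ``decay at least as fast as $|\vec\alpha|$'' is quantitatively insufficient: if they were merely of order $|\tau|^{-1}$, their quadratic self-interactions paired with $\psi_k$ would produce contributions of size $|\tau|^{-2}$, i.e. of the same order as the main terms $-\sqrt{8}(\alpha_i^2+\alpha_3^2)$, so they could not be absorbed into an error of size $|\tau|^{-2-\delta}$ and would in fact distort the ODE system (and likewise $\|w\|_\cH$ would only be $O(|\tau|^{-1})$, ruining even the linear term). The paper instead invokes the almost circular symmetry of \cite[Proposition 2.7]{DH_hearing_shape}, which forces these modes to be smaller than any power relevant here (the quantity $R(\tau)$ built from them is $\leq C|\tau|^{-100}$); this is an external input from the bubble-sheet symmetry-improvement machinery, not something you can recover from a formal negative quadratic drift as sketched. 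A minor further point: your cubic bound $C|\vec\alpha|^3$ also does not follow from Cauchy--Schwarz in $\cH$ alone, though there the pointwise bound $\sup|\hat u|\leq|\tau|^{-2\gamma}$ from \eqref{graph_rad_ass} gives an acceptable factor.
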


\begin{proof}
Let us consider the remainder
\begin{equation}
w:=\hat{u}-\sum_{j=1}^3 \alpha_j \psi_j.
\end{equation}
In light of \cite[Proof of Proposition 3.1]{DH_hearing_shape} it is enough to show that
\begin{equation}\label{est_to_show}
\left| 2\sum_{i=1}^3 \alpha_i(\tau) \langle \psi_i\psi_k , w(\tau)\rangle_\cH + \langle w(\tau)^2,\psi_k \rangle_\cH\right| \leq  \frac{C}{|\tau|^{2+\delta}}.
\end{equation}
To do so, we start by applying Lemma \ref{quant_MZ} (quantitative Merle-Zaag type estimate), which yields
\begin{equation}\label{est_w}
\| w(\tau)\|_\cH^2 \leq \frac{C}{|\tau|^\gamma}\big(|\vec{\alpha}(\tau)|^2+R(\tau)\big),
\end{equation}
where 
\begin{equation}
R=\langle \hat u, y_1\cos\vartheta\rangle_{\cH}^2+\langle \hat u, y_1\sin\vartheta\rangle_{\cH}^2
+\langle \hat u, y_2\cos\vartheta\rangle_{\cH}^2+\langle \hat u, y_2\sin\vartheta\rangle_{\cH}^2.
\end{equation}
Moreover, by the a priori estimates from \cite[Proposition 3.3 and Claim 3.5]{DH_hearing_shape} we have
\begin{equation}\label{upperlower_alpha}
\frac{1}{C|\tau|}\leq |\vec{\alpha}(\tau)|\leq \frac{C}{|\tau|},
\end{equation}
and thanks to the almost circular symmetry from \cite[Proposition 2.7]{DH_hearing_shape} we get
\begin{equation}\label{est_R}
R(\tau)\leq \frac{C}{|\tau|^{100}}.
\end{equation}
Combining the above, and choosing $\delta=\tfrac12 \gamma$, we obtain
\begin{equation}\label{est_w2}
\| w(\tau)\|_\cH \leq \frac{C}{|\tau|^{1+\delta}},
\end{equation}
hence
\begin{equation}
\left| 2\sum_{i=1}^3 \alpha_i(\tau) \big\langle \psi_i\psi_k , w(\tau)\big\rangle_\cH \right| \leq  \frac{C}{|\tau|^{2+\delta}}.
\end{equation}
On the other hand, to control the quadratic term in $w$ in \eqref{est_to_show} we will prove a gradient estimate. To this end, recall from \cite[Proposition 2.8]{DH_hearing_shape} that 
\begin{equation}
(\partial_\tau-\mathcal{L})\hat{u}=-\frac{1}{\sqrt{8}}\hat{u}^2+E,
\end{equation}
where taking also into account the almost circular symmetry from \cite[Proposition 2.7]{DH_hearing_shape} the error term can be estimated by
\begin{equation}\label{hat_e_error}
\| E(\tau)\|_{\mathcal{H}} \leq \frac{C}{|\tau|^\gamma}\| \hat{u}\|^2_{\mathcal{H}}  + \frac{C}{|\tau|^{10}}\leq  \frac{C}{|\tau|^{2+\gamma}},
\end{equation}
and where in the last step we used \eqref{upperlower_alpha} and \eqref{est_w2}.
Considering the projection $\mathcal{P}^\perp$ to the orthogonal complement of $\mathrm{span}\{ \psi_1,\psi_2,\psi_3\}$ this yields
\begin{equation}\label{eq_w}
(\partial_\tau - \mathcal{L}) w = g,
\end{equation}
where
\begin{equation}
g=\mathcal{P}^\perp \left(-\frac{1}{\sqrt{8}}\hat{u}^2+E\right).
\end{equation}
Since projections do not increase the norm, using \eqref{hat_e_error} we can estimate
\begin{equation}
\| g(\tau)\|_{\mathcal{H}}\leq \frac{1}{\sqrt{8}}\| \hat{u}(\tau)^2\|_{\mathcal{H}}+\frac{C}{|\tau|^{2+\gamma}}.
\end{equation}
Moreover, using \eqref{graph_rad_ass}, and remembering also \eqref{upperlower_alpha} and \eqref{est_w2}, we see that
\begin{equation}
\| \hat{u}(\tau)^2\|_{\mathcal{H}}\leq \frac{1}{|\tau|^{2\gamma}}\| \hat{u}(\tau)\|_{\mathcal{H}}\leq \frac{C}{|\tau|^{1+2\gamma}}.
\end{equation}
Combining the above, we have thus shown that
\begin{equation}\label{est_g}
\| g(\tau)\|_{\mathcal{H}}\leq  \frac{C}{|\tau|^{1+2\gamma}}.
\end{equation}
Now, given any $\bar\tau\leq\tau_0-1$ using \eqref{eq_w} and integration by parts we compute
\begin{align}
\frac{d}{d\tau}\int {e^{\bar\tau-\tau}} w^2e^{-q^2/4} &= \int  {e^{\bar\tau-\tau}}(2wg-2|\nabla w|^2 -w^2)\, e^{-q^2/4}\nonumber\\
& \leq \int  {e^{\bar\tau-\tau}}(g^2-2|\nabla w|^2)\, e^{-q^2/4},
\end{align}
and
\begin{align}
 \frac{d}{d\tau}\int (\tau-\bar\tau)|\nabla w|^2 e^{-q^2/4}
 &=\int \left(|\nabla w|^2  -2(\tau-\bar\tau) (\mathcal{L}  w)(\mathcal{L}w+g)\right)\, e^{-q^2/4}\nonumber\\
 &\leq\int \left(|\nabla w|^2  +\tfrac{1}{2}(\tau-\bar\tau) g^2)\right)\, e^{-q^2/4}\, .
\end{align}
This yields
\begin{align}
 \frac{d}{d\tau}\int \left((\tau-\bar\tau)|\nabla w|^2+\tfrac{e^{\bar\tau-\tau}}{2} w^2\right)\, e^{-q^2/4}\leq \int g^2\, e^{-q^2/4}\, .
 \end{align}
Thus, together with \eqref{est_w2} and \eqref{est_g} for all $\tau\leq\tau_0$ we get
\begin{equation}
\|\nabla w(\tau)\|_{\mathcal{H}} \leq \frac{C}{|\tau|^{1+\delta}}.
\end{equation}
Hence, applying the weighted Poincare inequality \cite[page 109]{Ecker_logsob} we conclude that
\begin{equation}\label{est_to_show_done}
\big| \langle w(\tau)^2,\psi_k \rangle_\cH\big| \leq  \frac{C}{|\tau|^{2+\delta}}.
\end{equation}
This finishes the proof of the proposition.
\end{proof}

We can now conclude the proof of our main theorem:

\begin{proof}[{Proof of Theorem \ref{spectral theorem_improved}}]
In the setting from above, we consider
\begin{equation}
a:=-\sqrt{2}(\alpha_1+\alpha_2),\qquad b:= 8(\alpha_1\alpha_2-\alpha_3^2)\, \qquad c=\alpha_3.
\end{equation}
Using our spectral ODEs \eqref{ODEs}, we infer that
\begin{equation}\label{Riccati ODEs_ab}
    \begin{cases}
      \dot{a}=2a^2-b+E_a,\\
       \dot{b}=2ab+E_b,\\
       \dot{c}=2ac+E_c,\\
    \end{cases}
\end{equation}
where thanks to Proposition \ref{improved_error} (improved error estimate) we have
\begin{equation}
|E_a(\tau)|\leq \frac{C}{|\tau|^{2+\delta}}, \quad |E_b(\tau)|\leq \frac{C}{|\tau|^{3+\delta}},\quad |E_c(\tau)|\leq \frac{C}{|\tau|^{2+\delta}}.
\end{equation}
In fact, it is useful to make yet another substitution. Specifically, we set
\begin{equation}
x:=-\tau a\, , \quad y:=\tau^2 b\ ,\quad z=-\tau c\, , \qquad \sigma:=-\log(-\tau)\, .
\end{equation}
In these new variables our problem takes the form
\begin{equation}\label{Riccati ODEs_xy}
    \begin{cases}
      x'=2x^2-x-y+E_x,\\
      y'=2xy-2y+E_y,\\
      z'=2xz-z+E_z,
    \end{cases}
    \end{equation}
where
\begin{equation}
|E_x(\sigma)|+|E_y(\sigma)|+|E_z(\sigma)|\leq Ce^{\delta\sigma}.
\end{equation}
To proceed, we can assume that we are in the rank $1$ case, since otherwise there is nothing to prove. Then, by our prior weak version of the bubble-sheet quantization theorem \cite[Theorem 3.6]{DH_hearing_shape} we already know the asymptotics of the trace and the determinant, specifically we already know that
\begin{equation}
(\alpha_1+\alpha_2)(\tau)= - \frac{1}{\sqrt{8}|\tau|}+o(|\tau|^{-1}),\quad (\alpha_1\alpha_2-\alpha_3^2)(\tau)= o(|\tau|^{-2}).
\end{equation}
Transforming variables this becomes
\begin{equation}\label{apriori_xy}
\lim_{\sigma\to -\infty} x(\sigma) =  \frac{1}{2}\, , \qquad \lim_{\sigma\to -\infty} y(\sigma)=0 \, .
\end{equation}
In particular, setting
\begin{equation}
p(\sigma):=1-2x(\sigma),
\end{equation}
we have
\begin{equation}
\sup_{\sigma\leq \sigma_0}|p(\sigma)|\leq \frac{\delta}{3},
\end{equation}
possibly after decreasing $\sigma_0=-\log(-\tau_0)$. Thus, for any $\sigma\leq\sigma_0$ we obtain
\begin{equation}
\left|  e^{-\int_{\sigma}^{\sigma_0} p(\tilde{\sigma}) d\tilde{\sigma}} E_z(\sigma) \right| \leq C e^{\frac{2}{3}\delta \sigma},
\end{equation}
where $C=C(\sigma_0)<\infty$, and consequently
\begin{equation}
\left| \frac{d}{d\sigma} \left(  e^{-\int_{\sigma}^{\sigma_0} p(\tilde{\sigma}) d\tilde{\sigma}}z(\sigma) \right) \right| \leq C e^{\frac{2}{3}\delta \sigma}.
\end{equation}
Finally, by an orthogonal transformation of the $y_1y_2$-coordinates, we can arrange that $\alpha_3(\tau_0)=0$. Therefore, for any $\sigma\leq\sigma_0$  we conclude that
\begin{equation}
|z(\sigma)| \leq Ce^{\frac{1}{3}\delta \sigma}.
\end{equation}
Translating back to the original variables, this shows that for $\tau\leq \tau_0$ we have
\begin{equation}
|\alpha_3(\tau)|\leq C\frac{1}{|\tau|^{1+\frac13 \delta}}.
\end{equation}
Hence, the conclusion of the bubble-sheet quantization theorem holds with a constant diagonal matrix.
\end{proof}

Finally, let us prove the corollary.

\begin{proof}[{Proof of Corollary \ref{cor_nonexistence}}]
If $\mathrm{rk}(Q)=0$, then by \cite{CHH_wing,DH_hearing_shape} the flow is, up to scaling and rigid motion, either the round shrinking bubble-sheet, which is in particular $\mathrm{SO}_2$ symmetric in the $\mathbb{R}^2$-factor,  or $\mathbb{R}\times$2d-bowl, which is in particular reflection symmetric.

If $\mathrm{rk}(Q)=1$, then by Theorem \ref{spectral theorem_improved}, there is a distinguished line $L\subset\mathbb{R}^2$, namely the range of $Q$, of inwards quadratic bending.

Finally, if $\mathrm{rk}(Q)=2$, then by \cite{CDDHS} the flow is, up to scaling and rigid motion, either the $\mathrm{SO}_{2}\times \mathrm{SO}_{2}$ symmetric ancient oval from \cite{HaslhoferHershkovits_ancient}, or belongs to $1$ parameter family of $\mathbb{Z}_2^2\times \mathrm{SO}_{2}$ symmetric ancient ovals from \cite{DH_ovals}, so the conclusion holds in the rank 2 case as well.
\end{proof}

\bigskip

\bibliography{no_rotation}

\bibliographystyle{alpha}

\vspace{10mm}

{\sc Wenkui du, Department of Mathematics, University of Toronto,  40 St George Street, Toronto, ON M5S 2E4, Canada}

{\sc Robert Haslhofer, Department of Mathematics, University of Toronto,  40 St George Street, Toronto, ON M5S 2E4, Canada}

\emph{E-mail:} wenkui.du@mail.utoronto.ca, roberth@math.toronto.edu

\end{document}